\theoremstyle{plain}
\newtheorem{thm}{\protect\theoremname}
\theoremstyle{plain}
\newtheorem{cor}[thm]{\protect\corollaryname}
\theoremstyle{plain}
\newtheorem{lem}[thm]{\protect\lemmaname}
\newenvironment{proof}[1][\protect\proofname]{\par
	\normalfont\topsep6\p@\@plus6\p@\relax
	\trivlist
	\itemindent\parindent
	\item[\hskip\labelsep\scshape #1]\ignorespaces
}{%
	\endtrivlist\@endpefalse
}
\providecommand{\proofname}{Proof}
\newcommand{\lyxaddress}[1]{
	\par {\raggedright #1
	\vspace{1.4em}
	\noindent\par}
}
\providecommand{\corollaryname}{Corollary}
\providecommand{\lemmaname}{Lemma}
\providecommand{\theoremname}{Theorem}
\begin{document}
\title{The Gauss map of hypersurfaces with constant weighted mean curvature
in the Gaussian space}
\author{Michael Gomez, Matheus Vieira}
\maketitle
\begin{abstract}
In this paper we study the Gauss map of hypersurfaces with constant
weighted mean curvature in the Gaussian space. We show that if the
image of the Gauss map is in a closed hemisphere, then the hypersurface
is a hyperplane or a generalized cylinder. We also show that if the
image of the Gauss map is in $S^{n}\setminus\bar{S}_{+}^{n-1}$, then
the hypersurface is a hyperplane. This generalizes previous results
for self-shrinkers obtained by Ding-Xin-Yang.
\end{abstract}

\section{Introduction}

One of the main problems in the theory of the mean curvature flow
is to understand self-shrinkers because they are singularity models
for the flow (see the survey \cite{CMP2015}). In this paper we are
interested in hypersurfaces with constant weighted mean curvature
in the Gaussian space, which generalize self-shrinkers.

Let $M$ be a hypersurface in the Gaussian space $R^{n+1}$ (the Euclidean
space with the Gaussian measure, for more details see Section 2).
We say that $M$ has constant weighted mean curvature $\lambda$ if
\[
H+\frac{1}{2}\langle x,N\rangle=\lambda.
\]
Here $H$ is the mean curvature of $M$ (using the convention that
$H$ is the trace of second fundamental form), $x$ is the position
vector of $R^{n+1}$, $N$ is the unit normal vector of $M$ and $\lambda$
is a constant. Hypersurfaces with constant weighted mean curvature
$\lambda$ in the Gaussian space $R^{n+1}$ (also known as $\lambda$-hypersurfaces)
are critical points of the weighted area functional for variations
preserving the enclosed weighted volume (see \cite{MR2015}, \cite{CW2018}).
Self-shrinkers in $R^{n+1}$ correspond to the special case of weighted
mean curvature $\lambda=0$.

There are many interesting papers about hypersurfaces with constant
weighted mean curvature, for example \cite{COW2016}, \cite{WX2017},
\cite{G2018}, \cite{WP2019}, \cite{AM2021}, \cite{CW2021}, \cite{S2021},
\cite{MV2022}. In this paper we are interested in the Gauss map of
these hypersurfaces. In \cite{BS2014}, Bao-Shi obtained results for
the Gauss map of translating solitons with bounded mean curvature.
In \cite{CMR2020}, Colombo-Mari-Rigoli obtained results for the Gauss
map of self-shrinkers and translating solitons in certain warped products.
In \cite{DXY2016}, Ding-Xin-Yang proved that for a complete self-shrinker
properly immersed in $R^{n+1}$, if the image of the Gauss map is
in a hemisphere, then the self-shrinker is a hyperplane or a generalized
cylinder (see also \cite{VZ2018}). They also proved that if the image
of the Gauss map is in $S^{n}\setminus\bar{S}_{+}^{n-1}$, then the
self-shrinker is a hyperplane. In this paper we generalize the results
of Ding-Xin-Yang to hypersurfaces with constant weighted mean curvature.

We would like to mention that by results of Cheng-Vieira-Zhou \cite{CVZ2021},
a complete hypersurface with constant weighted mean curvature in the
Gaussian space $R^{n+1}$ is properly immersed if and only if it has
finite weighted volume (see Theorem \ref{thm:finitevolume}).

For a complete hypersurface with constant weighted mean curvature
properly immersed in the Gaussian space $R^{n+1}$, we show that if
the image of the Gauss map is in a hemisphere, then the hypersurface
is a hyperplane or a generalized cylinder with a factor of constant
weighted mean curvature. This generalizes Theorem 3.1 in \cite{DXY2016}.
\begin{thm}
\label{thm:hemisphere}Let $M$ be a complete hypersurface with constant
weighted mean curvature $\lambda$ properly immersed in the Gaussian
space $R^{n+1}$. Suppose that the image of the Gauss map is in the
closed hemisphere $x_{n+1}\geq0$. Then either:

(i) $M$ is a hyperplane, or

(ii) $M$ splits isometrically as $M^{n}=\tilde{M}^{n-1}\times R$,
where $\tilde{M}^{n-1}$ is a hypersurface with constant weighted
mean curvature $\lambda$ in the Gaussian space $R^{n}\times\left\{ 0\right\} $.
\end{thm}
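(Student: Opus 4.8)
The plan is to study the function $v:=\langle N,e_{n+1}\rangle$, which is nonnegative on $M$ by hypothesis, by means of the drift Laplacian $\mathcal{L}u:=\Delta u-\tfrac12\langle x,\nabla u\rangle$, i.e. the operator that is self-adjoint with respect to the Gaussian weight $e^{-|x|^2/4}$. For an arbitrary fixed vector $E\in R^{n+1}$ and $v=\langle N,E\rangle$ one has the classical hypersurface identity $\Delta v=-\langle\nabla H,E\rangle-|A|^2v$ (with $\nabla H$ the intrinsic gradient); differentiating the constant weighted mean curvature equation $H+\tfrac12\langle x,N\rangle=\lambda$ in a tangent direction and using the Weingarten relation $\bar\nabla_X N=-A(X)$ gives $\nabla H=\tfrac12 A(x^{\top})$, where $x^{\top}$ is the tangential part of the position vector. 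Substituting this into the first identity and simplifying with the Codazzi equations yields the key identity
\[
\mathcal{L}v=-|A|^2v .
\]
The constant $\lambda$ has disappeared under differentiation, so this is exactly the equation that governs the self-shrinker case of Ding-Xin-Yang; it is the analytic core of the proof. Taking $E=e_{n+1}$, I would then split into two cases according to whether $v$ vanishes identically.

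If $v\equiv 0$, then $e_{n+1}$ is everywhere tangent to $M$, and since $e_{n+1}$ is a constant vector field of $R^{n+1}$ its restriction to $M$ is a parallel unit vector field; a complete manifold carrying such a field splits isometrically, so $M=\tilde{M}^{n-1}\times R$ with $R$ the $e_{n+1}$-direction. The flat $R$-factor contributes nothing to the mean curvature, and the $e_{n+1}$-component of $x$ is tangent, hence invisible to $\langle x,N\rangle$; it follows that $\tilde{M}\subset R^n\times\{0\}$ has constant weighted mean curvature $\lambda$ in $R^n$. This is conclusion (ii).

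If $v\not\equiv 0$, then since $v\ge 0$ and $\mathcal{L}v=-|A|^2v\le 0$ the function $v$ is $\mathcal{L}$-superharmonic; were it to vanish at a point it would vanish identically by the strong minimum principle (using that $M$ is connected), so in fact $v>0$ everywhere. Positivity of $v$ makes $M$ stable for the operator $L:=\mathcal{L}+|A|^2$: the Fischer-Colbrie-Schoen substitution $\phi=v\psi$ in the associated quadratic form, combined with $Lv=0$ and integration by parts against $e^{-|x|^2/4}$, gives
\[
\int_M\bigl(|\nabla\phi|^2-|A|^2\phi^2\bigr)e^{-|x|^2/4}\,dV=\int_M v^2|\nabla\psi|^2\,e^{-|x|^2/4}\,dV\ \ge\ 0
\]
for every $\phi\in C_c^{\infty}(M)$. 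By Theorem~\ref{thm:finitevolume} the properly immersed hypersurface $M$ has finite weighted volume, so I would take $\phi=\phi_R$ to be radial cutoffs (functions of $|x|$ alone, with compact support in $M$ by properness and $|\nabla\phi_R|\le C/R$) and let $R\to\infty$: the right-hand side is bounded by $CR^{-2}\,\mathrm{Vol}_w(M)\to 0$, which forces $\int_M|A|^2e^{-|x|^2/4}\,dV=0$. Hence $A\equiv 0$ and $M$ is a hyperplane, which is conclusion (i).

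The two differential identities and the integration by parts in the stability computation are routine. The point that needs genuine care is the limiting argument in the stability inequality: one must know the test functions may be taken compactly supported — which is exactly where properness, equivalently finite weighted volume via Theorem~\ref{thm:finitevolume}, enters — and that the cutoff energy $\int_M|\nabla\phi_R|^2e^{-|x|^2/4}\,dV$ tends to $0$. Handling this, together with the clean dichotomy $v\equiv0$ versus $v>0$ furnished by the strong minimum principle, is the main (though not severe) obstacle; everything else is essentially the template of Ding-Xin-Yang adapted to the presence of $\lambda$.
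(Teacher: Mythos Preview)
Your proposal is correct and follows essentially the same route as the paper: derive $\mathcal{L}\langle N,e_{n+1}\rangle=-|A|^{2}\langle N,e_{n+1}\rangle$, split into the cases $v\equiv 0$ (splitting) and $v\not\equiv 0$ (stability inequality plus cutoffs and finite weighted volume force $A\equiv 0$). The only cosmetic differences are that the paper packages the stability step via the Li--Wang lemma (which works directly from $v\ge 0$, $v\not\equiv 0$, without first invoking the strong minimum principle to get $v>0$) and uses intrinsic-distance cutoffs with $|\nabla\phi|\le 1$ on an annulus, whereas you use the Fischer--Colbrie--Schoen substitution and extrinsic radial cutoffs with $|\nabla\phi_R|\le C/R$; both choices lead to the same conclusion.
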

Applying this result to the factor, we get:
\begin{cor}
Let $M$ be a complete hypersurface with constant weighted mean curvature
$\lambda$ properly immersed in the Gaussian space $R^{n+1}$. Suppose
that the image of the Gauss map is in the intersection of the closed
hemispheres $x_{n+2-p}\geq0,\dots,x_{n+1}\geq0$ with $p\geq1$. Then
either:

(i) $M$ is a hyperplane, or

(ii) $M$ splits isometrically as $M^{n}=\tilde{M}^{n-p}\times R^{p}$,
where $\tilde{M}^{n-p}$ is a hypersurface with constant weighted
mean curvature $\lambda$ in the Gaussian space $R^{n+1-p}\times\left\{ 0\right\} ^{p}$.
\end{cor}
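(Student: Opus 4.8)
The plan is to argue by induction on $p$, using Theorem \ref{thm:hemisphere} as both the base case ($p=1$) and the engine of the inductive step. Suppose the statement holds for $p-1$ hemispheres, and assume the image of the Gauss map of $M$ lies in $\bigcap_{j=n+2-p}^{n+1}\{x_j\geq 0\}$. In particular it lies in the single closed hemisphere $x_{n+1}\geq 0$, so Theorem \ref{thm:hemisphere} applies: either $M$ is a hyperplane, and we are in case (i) and done, or $M$ splits isometrically as $M^n=\tilde M^{n-1}\times R$ with $\tilde M^{n-1}$ a hypersurface of constant weighted mean curvature $\lambda$ in the Gaussian space $R^n\times\{0\}$, the $R$-factor being the $x_{n+1}$-axis. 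It then remains to run the inductive hypothesis on the factor $\tilde M^{n-1}\subset R^n$.

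Two things have to be checked before invoking the inductive hypothesis on $\tilde M^{n-1}$. First, the hemisphere conditions descend to its Gauss map: at a point $(\tilde x,t)\in\tilde M\times R$ the unit normal of $M$ is $(\tilde N,0)$, where $\tilde N$ is the unit normal of $\tilde M$ in $R^n$; hence $\langle N,e_{n+1}\rangle\equiv 0$ and $\langle N,e_j\rangle=\langle\tilde N,e_j\rangle$ for $n+2-p\leq j\leq n$, so the image of the Gauss map of $\tilde M^{n-1}$ lies in $\bigcap_{j=n+2-p}^{n}\{x_j\geq 0\}$, an intersection of $p-1$ closed hemispheres in $S^{n-1}$. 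Second, $\tilde M^{n-1}$ must satisfy the hypotheses of the Corollary in dimension $n-1$: it is complete, being an isometric factor of the complete manifold $M$; it has constant weighted mean curvature $\lambda$, by the conclusion of Theorem \ref{thm:hemisphere}; and it is properly immersed. This last point is the only delicate one, and it is where Theorem \ref{thm:finitevolume} enters: since $M$ is properly immersed it has finite weighted volume, and because the Gaussian weight on $R^{n+1}$ factors along the splitting $M=\tilde M\times R$, with the integral of $e^{-t^2/4}$ over the $R$-factor finite, $\tilde M$ also has finite weighted volume; by Theorem \ref{thm:finitevolume}, applied in the Gaussian space $R^n$, $\tilde M$ is therefore properly immersed.

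With these verified, the inductive hypothesis gives that either $\tilde M^{n-1}$ is a hyperplane in $R^n$, whence $M=\tilde M^{n-1}\times R$ is an $n$-dimensional affine subspace of $R^{n+1}$, i.e. a hyperplane, and we are in case (i); or $\tilde M^{n-1}$ splits as $\hat M^{n-p}\times R^{p-1}$ with $\hat M^{n-p}$ of constant weighted mean curvature $\lambda$ in the Gaussian space $R^{n+1-p}\times\{0\}^{p-1}\subset R^n$, whence $M=\tilde M^{n-1}\times R=\hat M^{n-p}\times R^{p-1}\times R=\hat M^{n-p}\times R^p$ with $\hat M^{n-p}$ of constant weighted mean curvature $\lambda$ in $R^{n+1-p}\times\{0\}^p\subset R^{n+1}$, which is exactly case (ii). This closes the induction. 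The argument is essentially bookkeeping once Theorem \ref{thm:hemisphere} is available; the one step that requires an actual theorem rather than formal manipulation is the verification that each splitting factor remains properly immersed, for which the finite-weighted-volume characterization of Theorem \ref{thm:finitevolume} is precisely what is needed.
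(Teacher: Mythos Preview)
Your argument is correct and follows exactly the approach the paper indicates (the paper does not write out a separate proof, merely prefacing the Corollary with ``Applying this result to the factor, we get:''); you have supplied the details of the induction that this phrase implies. One minor remark: your appeal to Theorem \ref{thm:finitevolume} to verify that the factor $\tilde M^{n-1}$ is properly immersed is valid but heavier than necessary, since properness of $\tilde M\hookrightarrow R^n$ follows directly from properness of $M=\tilde M\times R\hookrightarrow R^{n+1}$ by pulling back compact sets of the form $K\times\{0\}$.
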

For a complete hypersurface with constant weighted mean curvature
properly immersed in the Gaussian space $R^{n+1}$, we show that if
the image of the Gauss map is in $S^{n}\setminus\bar{S}_{+}^{n-1}$,
then the hypersurface is a hyperplane. This generalizes Theorem 3.2
in \cite{DXY2016}.

Here $S^{n}\setminus\bar{S}_{+}^{n-1}$ is the set of all $x=\left(x_{1},\dots,x_{n+1}\right)$
in $S^{n}$ such that $\left(x_{1},x_{2}\right)$ is in $R^{2}\setminus\left([0,\infty)\times\left\{ 0\right\} \right)$.
For example, $S^{2}\setminus\bar{S}_{+}^{1}$ is $S^{2}$ minus the
closure of a semicircle.
\begin{thm}
\label{thm:semicircle}Let $M$ be a complete hypersurface with constant
weighted mean curvature $\lambda$ properly immersed in the Gaussian
space $R^{n+1}$. Suppose that the image of the Gauss map is in $S^{n}\setminus\bar{S}_{+}^{n-1}$.
Then $M$ is a hyperplane.
\end{thm}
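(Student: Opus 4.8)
The plan is to use the standard fact that, on a $\lambda$-hypersurface, each component $\langle N,V\rangle$ of the Gauss map solves an eigenfunction-type drift-Laplace equation, so that the hypothesis collapses $N$ into a single bounded angular function. Write $f=\tfrac{1}{4}|x|^{2}$ and let $\Delta_{f}=\Delta-\tfrac{1}{2}\langle x^{\top},\nabla\,\cdot\,\rangle$ be the drift Laplacian on $M$, so that $(\Delta_{f}w)e^{-f}=\operatorname{div}(e^{-f}\nabla w)$ for $w\in C^{\infty}(M)$. Combining the Weingarten and Codazzi equations with the identities $\nabla\langle N,V\rangle=-A(V^{\top})$ (for a fixed $V\in R^{n+1}$) and $\nabla H=\tfrac{1}{2}A(x^{\top})$ — the latter obtained by differentiating $H+\tfrac{1}{2}\langle x,N\rangle=\lambda$ — one checks that the $\langle x^{\top},A(V^{\top})\rangle$ contributions cancel and
\[
\Delta_{f}\langle N,V\rangle=-|A|^{2}\langle N,V\rangle,
\]
with no dependence on $\lambda$, exactly as in the self-shrinker case. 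Now put $u=\langle N,e_{1}\rangle$ and $v=\langle N,e_{2}\rangle$. The hypothesis on the Gauss map says precisely that $(u,v)$ never lies on $[0,\infty)\times\{0\}$; since the slit plane $R^{2}\setminus([0,\infty)\times\{0\})$ is simply connected, $\rho:=\sqrt{u^{2}+v^{2}}>0$ on all of $M$ and there is a globally defined smooth function $\theta\colon M\to(0,2\pi)$ with $(u,v)=\rho(\cos\theta,\sin\theta)$.

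Next I would derive and exploit the equation for $\theta$. From $\rho^{2}\nabla\theta=u\nabla v-v\nabla u$ and the identity above,
\[
\operatorname{div}_{f}(\rho^{2}\nabla\theta)=u\,\Delta_{f}v-v\,\Delta_{f}u=0\qquad\text{on }M,
\]
where $\operatorname{div}_{f}X=\operatorname{div}X-\langle\nabla f,X\rangle$. Since $M$ is complete and properly immersed, it has finite weighted volume $\int_{M}e^{-f}\,dV<\infty$ by Theorem \ref{thm:finitevolume}. Choosing standard cutoffs $\phi_{R}$ with $\phi_{R}\equiv1$ on the intrinsic ball $B_{R}$, $\operatorname{supp}\phi_{R}\subset B_{2R}$ and $|\nabla\phi_{R}|\le C/R$, testing the last display against $\theta\phi_{R}^{2}e^{-f}$, integrating by parts and using Cauchy--Schwarz gives
\[
\int_{M}\rho^{2}\phi_{R}^{2}|\nabla\theta|^{2}e^{-f}\,dV\le 4\int_{M}\theta^{2}\rho^{2}|\nabla\phi_{R}|^{2}e^{-f}\,dV\le\frac{C'}{R^{2}}\int_{M}e^{-f}\,dV,
\]
because $|\theta|\le 2\pi$ and $\rho\le 1$. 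Letting $R\to\infty$ forces $\nabla\theta\equiv0$, so $\theta$ is a constant $\theta_{0}$, and hence $\langle N,V_{0}\rangle=\rho>0$ on $M$ for the fixed unit vector $V_{0}=(\cos\theta_{0},\sin\theta_{0},0,\dots,0)$.

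It remains to pass from "$\langle N,V_{0}\rangle>0$" to "$M$ is a hyperplane". By the identity above, $\Delta_{f}\langle N,V_{0}\rangle=-|A|^{2}\langle N,V_{0}\rangle\le0$, so $\rho$ is a positive, $f$-superharmonic function bounded by $1$ on the complete finite-weighted-volume manifold $M$; testing $\Delta_{f}\rho\le0$ against $(\rho-\inf_{M}\rho)\phi_{R}^{2}e^{-f}$ and repeating the cutoff estimate (now using $0\le\rho-\inf_{M}\rho\le1$) yields $\nabla\rho\equiv0$, hence $\rho$ is constant and therefore $|A|^{2}\rho\equiv0$; since $\rho>0$ this forces $|A|\equiv0$, so $M$ is totally geodesic, i.e.\ a hyperplane. (Equivalently, once $\langle N,V_{0}\rangle\ge0$ one may invoke Theorem \ref{thm:hemisphere}: $M$ is either a hyperplane or splits as $\tilde M^{n-1}\times R$; in the second case $\tilde M$ inherits the same closed-hemisphere condition and finite weighted volume, so the same vanishing argument applied on $\tilde M$ forces $\tilde M$, and hence $M$, to be a hyperplane.) I expect the two points requiring genuine care to be the derivation of $\Delta_{f}\langle N,V\rangle=-|A|^{2}\langle N,V\rangle$ — this is precisely what turns the slit hypothesis into a single bounded function defined on all of $M$ — and the justification of the two integrations by parts on the noncompact $M$; the latter are handled uniformly by the finite-weighted-volume bound of Theorem \ref{thm:finitevolume}, and the remaining computations are routine.
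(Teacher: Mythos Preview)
Your proof is correct and follows essentially the same route as the paper: establish $\operatorname{div}_f(\rho^{2}\nabla\theta)=0$, use a cutoff together with the finite weighted volume from Theorem~\ref{thm:finitevolume} to force $\theta$ constant, and then conclude from $\langle N,V_{0}\rangle=\rho>0$. For the final step the paper simply invokes the proof of Theorem~\ref{thm:hemisphere} (the open-hemisphere case there already yields $A\equiv 0$), which makes your parenthetical discussion of the splitting case unnecessary---indeed that case cannot occur since $\langle N,V_{0}\rangle>0$ strictly---but your direct cutoff argument showing $\nabla\rho\equiv 0$ is an equally valid and self-contained alternative.
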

We would like to mention that Theorem 1.1 and Theorem 1.2 are also
considered in \cite{CW2014preprint}, but proof is sketchy (the paper
is not published yet). Our proof is completely different and it is
in detail.

In Section 2 we introduce the definitions, notations and conventions
of this paper. In Section 3 we prove some lemmas. In Section 4 we
prove Theorem \ref{thm:hemisphere}. In Section 5 we prove Theorem
\ref{thm:semicircle}.

We would like to thank Detang Zhou for the support and guidance.

\section{Preliminaries}

In this section we introduce the definitions, notations and conventions
of this paper. Then we fix them in the special case of hypersurfaces
in the Gaussian space.

\paragraph{Smooth metric measure spaces}

We say that $\left(M,g,f\right)$ is a smooth metric measure space
if $(M,g)$ is a Riemannian manifold and $f$ is a function on $M$.
The weighted volume is defined by
\[
dvol_{f}=e^{-f}dvol,
\]
where $dvol$ is the Riemannian volume of $M$. If $u$ is a function
on $M$, the weighted Laplacian of $u$ is defined by
\[
\Delta_{f}u=\Delta u-\left\langle \nabla f,\nabla u\right\rangle ,
\]
where $\nabla$ is the Levi-Civita connection of $M$ and $\Delta$
is the Laplacian of $M$. If $X$ is a vector field on $M$, the weighted
divergence of $X$ is defined by
\[
div_{f}X=divX-\left\langle \nabla f,X\right\rangle ,
\]
where $div$ is the divergence operator of $M$. Note that
\[
\Delta_{f}=div_{f}\nabla.
\]
The weighted Laplacian is a self-adjoint operator with respect to
the weighted volume, that is, if $u$ and $v$ are functions on $M$
with compact support then
\[
\int_{M}u\left(\Delta_{f}v\right)e^{-f}=-\int_{M}\left\langle \nabla u,\nabla v\right\rangle e^{-f}.
\]
More generally, if $X$ is a vector field on $M$ with compact support
then
\[
\int_{M}\left(div_{f}X\right)e^{-f}=0.
\]

\paragraph{Weighted mean curvature}

Let $M^{n}$ be a submanifold in a smooth metric measure space $\left(\bar{M}^{n+k},\bar{g},f\right)$.
The second fundamental form of $M$ is defined by
\[
A\left(X,Y\right)=\left(\bar{\nabla}_{X}Y\right)^{\perp},
\]
where $\bar{\nabla}$ is the Levi-Civita connection of $\bar{M}$
and $\perp$ is the projection to $TM^{\perp}$. The weighted mean
curvature vector of $M$ is defined by
\[
\vec{H}_{f}=tr_{TM}A+\left(\bar{\nabla}f\right)^{\perp}.
\]
Now suppose that $M$ is a hypersurface. The (scalar) weighted mean
curvature of $M$ is defined by
\[
H_{f}=\left\langle \vec{H}_{f},N\right\rangle ,
\]
where $N$ is the unit normal vector of $M$.

Let $M^{n}$ be a hypersurface in a smooth metric measure space $\left(\bar{M}^{n+1},\bar{g},f\right)$.
We say that $M$ has constant weighted mean curvature $\lambda$ if
\[
H_{f}=\lambda,
\]
where $\lambda$ is a constant. We say that $M$ is $f$-minimal if
\[
H_{f}=0.
\]
Note that $f$-minimal hypersurfaces are special cases of constant
weighted mean curvature hypersurfaces.

\paragraph{Hypersurfaces in the Gaussian space}

Now we review the definitions, notations and conventions introduced
above in the special case of hypersurfaces in the Gaussian space.
The Gaussian space $R^{n+1}$ is defined as the smooth metric measure
space $\left(R^{n+1},\bar{g},f\right)$, where $\bar{g}$ is the canonical
metric of $R^{n+1}$ and $f\left(x\right)=\left|x\right|^{2}/4$.
Let $M^{n}$ be a hypersurface in the Gaussian space $R^{n+1}$. Then
$\left(M,g,f\right)$ is also a smooth metric measure space, where
$g$ is the metric induced by $\bar{g}$. The weighted volume of $M$
is given by
\[
dvol_{f}=e^{-\left|x\right|^{2}/4}dvol,
\]
where $dvol$ is the Riemannian volume of $M$. The weighted Laplacian
of $M$ is given by
\[
\Delta_{f}u=\Delta u-\frac{1}{2}\left\langle x,\nabla u\right\rangle ,
\]
where $\nabla$ is the Levi-Civita connection of $M$, $\Delta$ is
the Laplacian of $M$ and $x$ is the position vector of $R^{n+1}$.
We would like to mention that $\Delta_{f}=\mathcal{L}$ in the notation
of Colding and Minicozzi \cite{CM2012}. The weighted divergence of
$M$ is given by
\[
div_{f}V=divV-\frac{1}{2}\left\langle x,V\right\rangle ,
\]
where $div$ is the divergence operator of $M$. The weighted mean
curvature of $M$ is given by
\[
H_{f}=H+\frac{1}{2}\langle x,N\rangle,
\]
where $H$ is the mean curvature of $M$ and $N$ is the unit normal
vector of $M$. In particular, $M$ has constant weighted mean curvature
$\lambda$ if and only if
\[
H+\frac{1}{2}\langle x,N\rangle=\lambda.
\]

\section{Lemmas}

We use the following result of Cheng-Vieira-Zhou (Theorem 1.4 in \cite{CVZ2021})
in the proof of Theorem \ref{thm:hemisphere} and Theorem \ref{thm:semicircle}.
\begin{thm}
\label{thm:finitevolume}Let $M$ be a complete hypersurface with
constant weighted mean curvature $\lambda$ in the Gaussian space
$R^{n+1}$. Then the following conditions are equivalent to each other:

(i) $M$ is properly immersed.

(ii) $M$ has finite weighted volume, that is
\[
vol_{f}\left(M\right)=\int_{M}e^{-f}<\infty.
\]

(iii) $M$ has polynomial volume growth.
\end{thm}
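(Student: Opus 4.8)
The plan is to base the whole equivalence on one pointwise identity for the weighted Laplacian of the restriction to $M$ of the potential $f=|x|^{2}/4$, and then run volume estimates in the spirit of Cheng--Zhou and Ding--Xin for self-shrinkers.

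\emph{Key identity.} Using $\Delta_{M}x=\vec{H}=HN$ together with the defining equation $H=\lambda-\tfrac12\langle x,N\rangle$, a direct computation gives, on any hypersurface with constant weighted mean curvature $\lambda$,
\[
\Delta_{f}f=\frac{n}{2}+\frac{\lambda}{2}\langle x,N\rangle-f.
\]
Since $|\langle x,N\rangle|\le|x|=2\sqrt{f}$, this yields $\Delta_{f}f\le C_{0}-\tfrac12 f$, and hence $\Delta_{M}f=\Delta_{f}f+\tfrac12\langle x,\nabla_{M}f\rangle\le C_{0}+\tfrac12 f$, with $C_{0}=\tfrac n2+\tfrac{\lambda^{2}}{2}$; also $|\nabla_{M}f|=\tfrac12|x^{\top}|\le\sqrt{f}$, and the sublevel set $\{f<t\}$ coincides with the extrinsic ball $\Omega_{t}:=M\cap \overline{B^{n+1}_{2\sqrt{t}}(0)}$.

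\emph{The easy implications.} For (iii) $\Rightarrow$ (ii) I would use the layer-cake identity $\int_{M}e^{-f}=\int_{0}^{\infty}e^{-t}\,\mathrm{vol}(\Omega_{t})\,dt$, which converges as soon as $\mathrm{vol}(\Omega_{t})$ grows polynomially. For (ii) $\Rightarrow$ (i) I would argue by contraposition: if $M$ is not properly immersed then some fixed ball $\overline{B^{n+1}_{R}(0)}$ meets $M$ in a set that is noncompact in the intrinsic topology; on that ball $|H|=|\lambda-\tfrac12\langle x,N\rangle|$ is bounded, so the monotonicity formula for hypersurfaces with bounded mean curvature gives a uniform lower bound $\mathrm{vol}(M\cap B_{\rho}(p))\ge c_{n}\rho^{n}$ for $p\in M\cap\overline{B^{n+1}_{R}(0)}$ and $\rho$ small; a noncompact complete portion of $M$ trapped in a bounded region then has infinite $n$-dimensional volume, and since $e^{-f}$ is bounded below on bounded sets, $\mathrm{vol}_{f}(M)=\infty$, contradicting (ii).

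\emph{The main implication} is (i) $\Rightarrow$ (iii), which I would prove following the Cheng--Zhou volume-estimate scheme. Assuming $M$ properly immersed, every $\Omega_{t}$ is compact; put $V(t)=\mathrm{vol}(\Omega_{t})$ and $A(t)=\mathcal{H}^{n-1}(\{f=t\})$. The co-area formula and the divergence theorem apply on $\{f<t\}$, and combining $\int_{\{f<t\}}\Delta_{M}f=\int_{\{f=t\}}|\nabla_{M}f|$ with $\Delta_{M}f\le C_{0}+\tfrac12 f$, the Cauchy--Schwarz estimate $A(t)^{2}\le\bigl(\int_{\{f=t\}}|\nabla_{M}f|\bigr)\bigl(\int_{\{f=t\}}|\nabla_{M}f|^{-1}\bigr)=\bigl(\int_{\{f<t\}}\Delta_{M}f\bigr)V'(t)$, and the gradient bound $|\nabla_{M}f|\le\sqrt{t}$ (which gives $A(t)\le\sqrt{t}\,V'(t)$), one extracts a Gronwall-type differential inequality relating $V(t)$, $V'(t)$ and $t$ whose integration yields $V(t)\le C\,t^{d}$ for some $d=d(n)$. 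Then the cycle (iii) $\Rightarrow$ (ii) $\Rightarrow$ (i) $\Rightarrow$ (iii) closes. The technical heart, and the step I expect to be the main obstacle, is this last passage — extracting a usable closed differential inequality for $V(t)$ from the one-sided subsolution estimate for $f$ and controlling the constants (for $\lambda=0$ this is exactly the Cheng--Zhou estimate, and the modification for $\lambda\neq0$ only affects $C_{0}$ and the lower-order terms); a secondary subtlety is making the ``noncompact portion has infinite volume'' step in (ii) $\Rightarrow$ (i) fully rigorous via the monotonicity formula.
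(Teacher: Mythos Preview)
The paper does not prove this theorem: it is stated as a quotation of Theorem~1.4 of Cheng--Vieira--Zhou \cite{CVZ2021} and used as a black box in the proofs of Theorems~\ref{thm:hemisphere} and~\ref{thm:semicircle}. So there is no ``paper's own proof'' to compare your proposal against.

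That said, your sketch is headed in the right direction and is essentially the strategy of \cite{CVZ2021} (which in turn generalizes the Cheng--Zhou and Ding--Xin arguments for self-shrinkers). Your key identity $\Delta_{f}f=\tfrac{n}{2}+\tfrac{\lambda}{2}\langle x,N\rangle-f$ is correct, and the implications (iii)~$\Rightarrow$~(ii) (layer-cake) and (ii)~$\Rightarrow$~(i) (monotonicity/lower volume bound on a trapped noncompact piece) are the standard ones. For the main step (i)~$\Rightarrow$~(iii) you outline the coarea/divergence comparison leading to an ODE inequality for $V(t)$; this is indeed the heart of the matter, but as written your sketch stops short of producing a closed inequality that integrates to polynomial growth --- the Cauchy--Schwarz bound you state gives $A(t)^{2}\le V'(t)\int_{\{f<t\}}\Delta_{M}f$, and one still needs to convert $\int_{\{f<t\}}\Delta_{M}f$ back into quantities involving $V$ and $V'$ (typically by using $\Delta_{M}f\le C_{0}+\tfrac12 f$ together with an integration by parts or a second application of coarea). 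You correctly flag this as the main obstacle; filling it in would amount to reproducing the argument in \cite{CVZ2021}.
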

We use the following result of Li-Wang (Proposition 1.1 in \cite{LW2006})
in the proof of Theorem \ref{thm:hemisphere}.
\begin{lem}
\label{lem:liwang}Let $\left(M,g,f\right)$ be a smooth metric measure
space and let $h$ be a function on $M$ such that $h\geq0$ (not
identically zero) and
\[
\Delta_{f}h+qh\leq0,
\]
where $q$ is a continuous function on $M$. Then
\[
\int_{M}q\phi^{2}e^{-f}\leq\int_{M}\left|\nabla\phi\right|^{2}e^{-f},
\]
for all $\phi$ in $C_{c}^{\infty}\left(M\right)$.
\end{lem}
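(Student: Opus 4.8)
The plan is to run the classical ``multiply by $\phi^{2}/h$, integrate by parts, complete the square'' argument for nonnegative supersolutions, adapted to the weighted Laplacian. The one delicate point is the possible presence of zeros of $h$, so I would begin by reducing to the case $h>0$ on all of $M$. Writing $q=q^{+}-q^{-}$ and using $h\geq 0$, the hypothesis gives $\Delta_{f}h\leq -qh\leq q^{-}h$, so $h$ is a nonnegative supersolution of the operator $\Delta_{f}-q^{-}$, which is second order elliptic with continuous, nonpositive zeroth-order coefficient $-q^{-}$. By the strong maximum principle, a nonnegative function satisfying such an inequality that vanishes at an interior point is identically zero, which is excluded by hypothesis; hence $h>0$ on $M$. (Alternatively one can work throughout with $h_{\varepsilon}=h+\varepsilon$, which satisfies $\Delta_{f}h_{\varepsilon}+qh_{\varepsilon}\leq q\varepsilon$, run the estimate below with $q$ replaced by $qh/h_{\varepsilon}$, and let $\varepsilon\to 0$; the maximum-principle route avoids the convergence bookkeeping.)

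With $h>0$ in hand, divide the differential inequality by $h$ to obtain the pointwise bound $q\leq -\Delta_{f}h/h$. Fix $\phi\in C_{c}^{\infty}(M)$, multiply by $\phi^{2}$, and integrate against $e^{-f}\,dvol$:
\[
\int_{M}q\,\phi^{2}e^{-f}\;\leq\;-\int_{M}\frac{\Delta_{f}h}{h}\,\phi^{2}e^{-f}.
\]
Since $\Delta_{f}$ is self-adjoint with respect to $e^{-f}\,dvol$ and $\phi^{2}/h$ is a legitimate compactly supported test function (it is $C^{2}$ and supported in $\operatorname{supp}\phi$, where $h$ is bounded below by a positive constant), integration by parts yields
\[
-\int_{M}\frac{\Delta_{f}h}{h}\,\phi^{2}e^{-f}=\int_{M}\Big\langle\nabla h,\nabla\Big(\frac{\phi^{2}}{h}\Big)\Big\rangle e^{-f}=\int_{M}\Big(\frac{2\phi}{h}\langle\nabla h,\nabla\phi\rangle-\frac{\phi^{2}}{h^{2}}|\nabla h|^{2}\Big)e^{-f}.
\]
Now apply the elementary estimate $\frac{2\phi}{h}\langle\nabla h,\nabla\phi\rangle\leq|\nabla\phi|^{2}+\frac{\phi^{2}}{h^{2}}|\nabla h|^{2}$ (Cauchy--Schwarz together with $2ab\leq a^{2}+b^{2}$) to the integrand; the $|\nabla h|^{2}$ terms cancel, leaving
\[
\int_{M}q\,\phi^{2}e^{-f}\leq\int_{M}|\nabla\phi|^{2}e^{-f},
\]
which is the assertion.

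I expect the main obstacle to be precisely the reduction to $h>0$: one must either justify the strong maximum principle for the weighted operator with a continuous, sign-changing potential (equivalently, for $\Delta_{f}-q^{-}$ with $-q^{-}\le 0$), or, going the regularization route, verify that $\{h=0\}$ is negligible so that $qh/h_{\varepsilon}\to q$ a.e. and the limit $\varepsilon\to 0$ can be taken (monotone/dominated convergence, using that $q\phi^{2}$ is compactly supported hence integrable). Everything downstream of that reduction is the routine integration-by-parts computation above, the only mild technical point being the density argument needed to use $\phi^{2}/h$ in the self-adjointness identity, which is immediate once $h$ is $C^{2}$ and positive on the support of $\phi$.
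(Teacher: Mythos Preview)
The paper does not actually supply a proof of this lemma: it is quoted verbatim as Proposition~1.1 of Li--Wang \cite{LW2006} and used as a black box in the proof of Theorem~\ref{thm:hemisphere}. So there is nothing in the paper to compare your argument against.

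That said, your argument is correct and is precisely the standard Fischer--Colbrie--Schoen/Li--Wang trick: once $h>0$, multiply the inequality by $\phi^{2}/h$, integrate against the weighted measure, and use Cauchy--Schwarz on the cross term. Your reduction to $h>0$ via the strong maximum principle for $\Delta_{f}-q^{-}$ (an operator with nonpositive zeroth-order part) is the clean way to handle the zeros; implicitly this uses that $M$ is connected, which is the standing convention in the paper and in \cite{LW2006}. The alternative $h_{\varepsilon}=h+\varepsilon$ route you sketch also works but, as you note, requires a little more care in the limit. Nothing is missing.
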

We use the following result in the proof of Theorem \ref{thm:hemisphere}
and Lemma \ref{lem:laplacianthetanv}.
\begin{lem}
\label{lem:laplaciannv}Let $M$ be a hypersurface in the smooth metric
measure space $\left(R^{n+1},\bar{g},f\right)$, where $\bar{g}$
is the the canonical metric of $R^{n+1}$ and $f$ is a function on
$R^{n+1}$ such that $\bar{\nabla}\bar{\nabla}f=c\bar{g}$ for some
$c$ constant. Let $V$ be a fixed vector in $R^{n+1}$. Then
\[
\Delta_{f}\left\langle N,V\right\rangle =-\left|A\right|^{2}\left\langle N,V\right\rangle -\left\langle \nabla H_{f},V\right\rangle .
\]
\end{lem}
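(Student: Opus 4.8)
The plan is to compute the rough Laplacian $\Delta\langle N,V\rangle$ first, then convert to $\Delta_f$ by subtracting $\frac{1}{2}\langle\bar\nabla f,\nabla\langle N,V\rangle\rangle$-type terms, using the hypothesis $\bar\nabla\bar\nabla f=c\bar g$ to keep the Hessian of $f$ under control. Fix a local orthonormal frame $e_1,\dots,e_n$ on $M$ that is geodesic at a point $p$, so covariant derivatives of the frame vanish at $p$. Since $V$ is a fixed (parallel) vector in $R^{n+1}$, we have $\bar\nabla_{e_i}V=0$, so $e_i\langle N,V\rangle=\langle\bar\nabla_{e_i}N,V\rangle=\langle -A(e_i),V\rangle$ where I write $A(e_i)=-\bar\nabla_{e_i}N$ for the shape operator applied to $e_i$ (a tangent vector). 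Then differentiate again: $e_j e_i\langle N,V\rangle=-\langle\bar\nabla_{e_j}(A(e_i)),V\rangle$, and decompose $\bar\nabla_{e_j}(A(e_i))$ into tangential and normal parts. The normal part contributes $\langle A(e_i),A(e_j)\rangle N$-type terms (second fundamental form squared), and the tangential part, after summing $i=j$ and using the Codazzi equation in $R^{n+1}$ (which says $\nabla A$ is totally symmetric since the ambient space is flat), produces $\nabla H$ up to sign.

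Carrying this out, $\Delta\langle N,V\rangle=\sum_i e_ie_i\langle N,V\rangle$ at $p$ splits as $-|A|^2\langle N,V\rangle-\langle\nabla H,V\rangle$, which is the classical Simons-type identity for the support function of the Gauss map in Euclidean space. The remaining task is the weighted correction. By definition $\Delta_f\langle N,V\rangle=\Delta\langle N,V\rangle-\langle\bar\nabla f,\nabla\langle N,V\rangle\rangle$, and one computes $\langle\bar\nabla f,\nabla\langle N,V\rangle\rangle=\sum_i\langle\bar\nabla f,e_i\rangle\,e_i\langle N,V\rangle=-\sum_i\langle\bar\nabla f,e_i\rangle\langle A(e_i),V\rangle$. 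On the other hand, $\langle\nabla H_f,V\rangle=\langle\nabla H,V\rangle+\frac{1}{2}\langle\nabla\langle x,N\rangle,V\rangle$ in the Gaussian case; in the general case with $\bar\nabla\bar\nabla f=c\bar g$ the analogous term is $\langle\nabla\langle\bar\nabla f,N\rangle,V\rangle$. The key is to show these two corrections match: expand $e_i\langle\bar\nabla f,N\rangle=\langle\bar\nabla_{e_i}\bar\nabla f,N\rangle+\langle\bar\nabla f,\bar\nabla_{e_i}N\rangle=c\langle e_i,N\rangle+\langle\bar\nabla f,-A(e_i)\rangle=-\langle\bar\nabla f,A(e_i)\rangle$, since $\langle e_i,N\rangle=0$ and $\bar\nabla_{e_i}\bar\nabla f=c\,e_i$ by the Hessian hypothesis. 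Therefore $\langle\nabla\langle\bar\nabla f,N\rangle,V\rangle=-\sum_i\langle\bar\nabla f,A(e_i)\rangle\langle e_i,V^{\top}\rangle$, and after matching indices this is exactly $\langle\bar\nabla f,\nabla\langle N,V\rangle\rangle$ up to the bookkeeping of tangential projections. Combining, $\Delta_f\langle N,V\rangle=-|A|^2\langle N,V\rangle-\langle\nabla H,V\rangle-\langle\nabla\langle\bar\nabla f,N\rangle,V\rangle=-|A|^2\langle N,V\rangle-\langle\nabla H_f,V\rangle$.

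The main obstacle is the careful handling of the tangential-versus-ambient decomposition: $V$ is a fixed ambient vector, $\nabla\langle N,V\rangle$ is a tangent vector field on $M$, and terms like $\langle\nabla H_f,V\rangle$ secretly mean $\langle\nabla H_f,V^{\top}\rangle$. One must be consistent about when $\bar\nabla f$ is being treated as an ambient vector versus decomposed as $(\bar\nabla f)^{\top}+\langle\bar\nabla f,N\rangle N$, and verify the Hessian hypothesis $\bar\nabla\bar\nabla f=c\bar g$ is used precisely where needed (namely to kill the second-derivative-of-$f$ term via orthogonality to $N$). Everything else is a routine application of the Gauss and Codazzi equations for a hypersurface in flat space, and the sign conventions — in particular the paper's convention that $H=\mathrm{tr}\,A$ — must be tracked to land the stated signs.
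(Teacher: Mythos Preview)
Your proposal is correct and follows essentially the same route as the paper: compute $\Delta\langle N,V\rangle=-|A|^{2}\langle N,V\rangle-\langle\nabla H,V^{T}\rangle$ via a geodesic frame and Codazzi, then use $\bar\nabla\bar\nabla f=c\bar g$ to show that the drift term $\langle\nabla f,\nabla\langle N,V\rangle\rangle$ exactly accounts for the difference $\langle\nabla H_{f}-\nabla H,V^{T}\rangle$. The only cosmetic difference is that the paper isolates the identity $\nabla H_{f}=\nabla H-\sum_{i}A(\nabla f,e_{i})e_{i}$ as a separate step, whereas you compute $\nabla\langle\bar\nabla f,N\rangle$ directly; by symmetry of $A$ these are the same computation.
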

\begin{proof}
Let $p$ be a point in $M$. We will prove the above formula at this
point. Let $e_{1},\dots,e_{n}$ be a local orthonormal basis of $TM$
such that $\left(\nabla e_{i}\right)\left(p\right)=0$. We can identify
the second fundamental form with a $\left(0,2\right)$-tensor, that
is, we can assume that $A\left(X,Y\right)=\left\langle \bar{\nabla}_{X}Y,N\right\rangle $.
We write
\[
v=\left\langle N,V\right\rangle .
\]

Step 1. We have
\[
\Delta v=-\left|A\right|^{2}v-\left\langle \nabla H,V^{T}\right\rangle .
\]

Proof. We have
\begin{align*}
e_{i}v & =\left\langle \bar{\nabla}_{e_{i}}N,V\right\rangle +\left\langle N,\bar{\nabla}_{e_{i}}V\right\rangle \\
 & =\left\langle \bar{\nabla}_{e_{i}}N,V\right\rangle .
\end{align*}
Since $\bar{\nabla}_{e_{i}}N=-\sum_{j}A\left(e_{i},e_{j}\right)e_{j}$
we have
\[
e_{i}v=-\sum_{j}A\left(e_{i},e_{j}\right)\left\langle e_{j},V\right\rangle .
\]
Then
\begin{align*}
e_{i}e_{i}v & =-\sum_{j}\left\lbrace e_{i}\left(A\left(e_{i},e_{j}\right)\right)\left\langle e_{j},V\right\rangle +A\left(e_{i},e_{j}\right)\left(\left\langle \bar{\nabla}_{e_{i}}e_{j},V\right\rangle +\left\langle e_{j},\bar{\nabla}_{e_{i}}V\right\rangle \right)\right\rbrace \\
 & =-\sum_{j}\left\lbrace e_{i}\left(A\left(e_{i},e_{j}\right)\right)\left\langle e_{j},V\right\rangle +A\left(e_{i},e_{j}\right)\left\langle \bar{\nabla}_{e_{i}}e_{j},V\right\rangle \right\rbrace .
\end{align*}
Since $\left(\nabla e_{i}\right)\left(p\right)=0$ we have $e_{i}\left(A\left(e_{i},e_{j}\right)\right)=\left(\nabla_{e_{i}}A\right)\left(e_{i},e_{j}\right)$
and $\bar{\nabla}_{e_{i}}e_{j}=A\left(e_{i},e_{j}\right)N$. Then
\[
e_{i}e_{i}v=-\sum_{j}\left(\nabla_{e_{i}}A\right)\left(e_{i},e_{j}\right)\left\langle e_{j},V\right\rangle -\sum_{j}A\left(e_{i},e_{j}\right)^{2}\left\langle N,V\right\rangle .
\]
By the Codazzi equation we have $\left(\nabla_{e_{i}}A\right)\left(e_{i},e_{j}\right)=\left(\nabla_{e_{j}}A\right)\left(e_{i},e_{i}\right)$.
Then
\[
e_{i}e_{i}v=-\left(\nabla_{V^{T}}A\right)\left(e_{i},e_{i}\right)-\sum_{j}A\left(e_{i},e_{j}\right)^{2}\left\langle N,V\right\rangle .
\]
Summing in $i$ and using again the fact that $\left(\nabla e_{i}\right)\left(p\right)=0$
we get
\[
\Delta v=-\left|A\right|^{2}v-\left\langle \nabla H,V^{T}\right\rangle .
\]
The proof of Step 1 is complete.

Step 2. We have
\[
\nabla H_{f}=\nabla H-\sum_{i}A\left(\nabla f,e_{i}\right)e_{i}.
\]

Proof. Since $H_{f}=H+\left\langle \bar{\nabla}f,N\right\rangle $
we have 
\[
e_{i}H_{f}=e_{i}H+\left\langle \bar{\nabla}_{e_{i}}\bar{\nabla}f,N\right\rangle +\left\langle \bar{\nabla}f,\bar{\nabla}_{e_{i}}N\right\rangle .
\]
Since $\bar{\nabla}_{e_{i}}N=-\sum_{j}A\left(e_{i},e_{j}\right)e_{j}$
and $\bar{\nabla}\bar{\nabla}f=c\bar{g}$ we have 
\begin{align*}
e_{i}H_{f} & =e_{i}H+\bar{\nabla}\bar{\nabla}f\left(e_{i},N\right)-\sum_{j}A\left(e_{i},e_{j}\right)\left\langle \bar{\nabla}f,e_{j}\right\rangle \\
 & =e_{i}H-A\left(\nabla f,e_{i}\right).
\end{align*}
The proof of Step 2 is complete.

Finally, since $e_{i}v=-\sum_{j}A\left(e_{i},e_{j}\right)\left\langle e_{j},V\right\rangle $
we have
\begin{align*}
\Delta_{f}v & =\Delta v-\left\langle \nabla f,\nabla v\right\rangle \\
 & =\Delta v+A\left(\nabla f,V^{T}\right).
\end{align*}
By the previous steps we have
\begin{align*}
\Delta_{f}v & =-\left|A\right|^{2}v-\left\langle \nabla H,V^{T}\right\rangle +A\left(\nabla f,V^{T}\right)\\
 & =-\left|A\right|^{2}v-\left\langle \nabla H_{f},V^{T}\right\rangle \\
 & =-\left|A\right|^{2}v-\left\langle \nabla H_{f},V\right\rangle .
\end{align*}
The proof of the lemma is complete.
\end{proof}
We use the following lemma in the proof of Lemma \ref{lem:laplacianthetanv}.

Note that if $\left(u,v\right)$ is in $R^{2}\setminus\left([0,\infty)\times\left\{ 0\right\} \right)$
with $u\neq0$ and $v\neq0$ then
\[
\arctan\left(\frac{v}{u}\right)=\frac{\pi}{2}-\arctan\left(\frac{u}{v}\right).
\]

\begin{lem}
\label{lem:laplaciantheta}Let $\left(M,g,f\right)$ a smooth metric
measure space and let $u$ and $v$ be functions on $M$ such that
$\left(u\left(p\right),v\left(p\right)\right)$ is in $R^{2}\setminus\left([0,\infty)\times\left\{ 0\right\} \right)$
for all $p$ in $M$. Let $\theta$ be the function on $M$ given
by
\[
\theta\left(p\right)=\begin{cases}
\arctan\left(\frac{v\left(p\right)}{u\left(p\right)}\right) & \mathrm{if}\,\,\,u\left(p\right)\neq0,\\
\frac{\pi}{2}-\arctan\left(\frac{u\left(p\right)}{v\left(p\right)}\right) & \mathrm{if}\,\,\,v\left(p\right)\neq0,
\end{cases}
\]
and let $\rho$ be the function on $M$ given by
\[
\rho\left(p\right)=\sqrt{u\left(p\right)^{2}+v\left(p\right)^{2}}.
\]
Then
\[
div_{f}\left(\rho^{2}\nabla\theta\right)=u\Delta_{f}v-v\Delta_{f}u,
\]
and
\[
\rho\Delta_{f}\rho=u\Delta_{f}u+v\Delta_{f}v+\rho^{2}\left|\nabla\theta\right|^{2}.
\]
\end{lem}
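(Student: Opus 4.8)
The plan is to reduce both formulas to the two pointwise gradient identities
\[
\rho\,\nabla\rho = u\,\nabla u + v\,\nabla v,\qquad \rho^{2}\,\nabla\theta = u\,\nabla v - v\,\nabla u,
\]
and then apply $\mathrm{div}_{f}$ to each. First I would check that $\rho$ and $\theta$ are genuine smooth functions on $M$: since $\left(u\left(p\right),v\left(p\right)\right)$ never lies on $[0,\infty)\times\{0\}$, one has $\rho>0$ everywhere, while the two branches in the definition of $\theta$ overlap exactly on $\{u\neq0\}\cap\{v\neq0\}$, where they agree by the identity $\arctan(v/u)=\tfrac{\pi}{2}-\arctan(u/v)$ recalled just before the statement; hence $\theta$ is well defined and smooth. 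The first gradient identity is immediate from differentiating $\rho^{2}=u^{2}+v^{2}$. For the second, I would differentiate $\theta$ on each branch: on $\{u\neq0\}$, $\nabla\theta=\nabla\arctan(v/u)=\dfrac{u\,\nabla v-v\,\nabla u}{u^{2}+v^{2}}$, and on $\{v\neq0\}$, $\nabla\theta=-\nabla\arctan(u/v)=\dfrac{u\,\nabla v-v\,\nabla u}{u^{2}+v^{2}}$, so on all of $M$ we get $\nabla\theta=\rho^{-2}\left(u\,\nabla v-v\,\nabla u\right)$.

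For the first formula of the lemma, I would apply $\mathrm{div}_{f}$ to $\rho^{2}\nabla\theta=u\,\nabla v-v\,\nabla u$, using the weighted Leibniz rule $\mathrm{div}_{f}\left(wX\right)=\left\langle \nabla w,X\right\rangle +w\,\mathrm{div}_{f}X$ (which follows from $\mathrm{div}_{f}=\mathrm{div}-\left\langle \nabla f,\cdot\right\rangle $ together with the ordinary Leibniz rule) and $\Delta_{f}=\mathrm{div}_{f}\nabla$. This gives
\[
\mathrm{div}_{f}\left(\rho^{2}\nabla\theta\right)=\left\langle \nabla u,\nabla v\right\rangle +u\,\Delta_{f}v-\left\langle \nabla v,\nabla u\right\rangle -v\,\Delta_{f}u=u\,\Delta_{f}v-v\,\Delta_{f}u,
\]
the cross terms cancelling.

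For the second formula, I would apply $\mathrm{div}_{f}$ to $\rho\,\nabla\rho=u\,\nabla u+v\,\nabla v$, obtaining
\[
\left|\nabla\rho\right|^{2}+\rho\,\Delta_{f}\rho=\left|\nabla u\right|^{2}+u\,\Delta_{f}u+\left|\nabla v\right|^{2}+v\,\Delta_{f}v,
\]
so it remains to identify $\left|\nabla u\right|^{2}+\left|\nabla v\right|^{2}-\left|\nabla\rho\right|^{2}$ with $\rho^{2}\left|\nabla\theta\right|^{2}$. This I would extract from the two gradient identities alone: taking squared norms gives $\rho^{2}\left|\nabla\rho\right|^{2}=\left|u\,\nabla u+v\,\nabla v\right|^{2}$ and $\rho^{4}\left|\nabla\theta\right|^{2}=\left|u\,\nabla v-v\,\nabla u\right|^{2}$, and adding these the mixed $\left\langle \nabla u,\nabla v\right\rangle $ terms cancel, leaving $\rho^{2}\left|\nabla\rho\right|^{2}+\rho^{4}\left|\nabla\theta\right|^{2}=\left(u^{2}+v^{2}\right)\left(\left|\nabla u\right|^{2}+\left|\nabla v\right|^{2}\right)=\rho^{2}\left(\left|\nabla u\right|^{2}+\left|\nabla v\right|^{2}\right)$; dividing by $\rho^{2}$ yields the required identity, and substituting it into the displayed equation finishes the proof.

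I do not expect a serious obstacle here: the content is elementary once $\theta$ is set up correctly. The only points that demand care are verifying that $\theta$ is globally well defined and smooth and that its gradient has the stated closed form on both branches of its definition, and being careful to use the weighted Leibniz rule rather than the ordinary one when differentiating products; the remaining manipulations are short computations.
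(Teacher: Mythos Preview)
Your proposal is correct and follows essentially the same approach as the paper: both establish the gradient identities $\rho\nabla\rho=u\nabla u+v\nabla v$ and $\rho^{2}\nabla\theta=u\nabla v-v\nabla u$, then apply $\mathrm{div}_{f}$ and sort out the first-order terms. The only cosmetic difference is in verifying $|\nabla u|^{2}+|\nabla v|^{2}-|\nabla\rho|^{2}=\rho^{2}|\nabla\theta|^{2}$: the paper expands $|\nabla\rho|^{2}$ directly and recognizes $|u\nabla v-v\nabla u|^{2}/\rho^{2}$, while you add the squared norms of the two gradient identities and divide by $\rho^{2}$, which is arguably a bit cleaner.
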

\begin{proof}
Let $p$ be a point in $M$. We will prove the above formulas at this
point.

First we prove the formula of the weighted Laplacian of $\theta$.
We can assume that $u\left(p\right)\neq0$ because the other case
is similar. By continuity we have $u\neq0$ in an open set containing
$p$. Then
\[
\theta=\arctan\left(\frac{v}{u}\right)
\]
in an open set containing $p$. So
\begin{align*}
\nabla\theta & =\frac{1}{1+\left(\frac{v}{u}\right)^{2}}\nabla\left(\frac{v}{u}\right)\\
 & =\frac{1}{u^{2}+v^{2}}\left(u\nabla v-v\nabla u\right).
\end{align*}
Since $\rho^{2}=u^{2}+v^{2}$ we have
\[
\rho^{2}\nabla\theta=u\nabla v-v\nabla u.
\]
Then
\begin{align*}
div_{f}\left(\rho^{2}\nabla\theta\right) & =div_{f}\left(u\nabla v-v\nabla u\right)\\
 & =u\Delta_{f}v-v\Delta_{f}u.
\end{align*}

Now we prove the formula of the weighted Laplacian of $\rho$. Since
$\rho^{2}=u^{2}+v^{2}$ we have
\[
\rho\nabla\rho=u\nabla u+v\nabla v.
\]
Then
\[
div_{f}\left(\rho\nabla\rho\right)=div_{f}\left(u\nabla u+v\nabla v\right),
\]
that is
\[
\rho\Delta_{f}\rho+\left|\nabla\rho\right|^{2}=u\Delta_{f}u+v\Delta_{f}v+\left|\nabla u\right|^{2}+\left|\nabla v\right|^{2}.
\]
Since
\[
\left|\nabla\rho\right|^{2}=\frac{1}{\rho^{2}}\left(u^{2}\left|\nabla u\right|^{2}+v^{2}\left|\nabla v\right|^{2}+2uv\left\langle \nabla u,\nabla v\right\rangle \right),
\]
we have
\begin{align*}
\rho\Delta_{f}\rho & =u\Delta_{f}u+v\Delta_{f}v+\left(1-\frac{u^{2}}{\rho^{2}}\right)\left|\nabla u\right|^{2}+\left(1-\frac{v^{2}}{\rho^{2}}\right)\left|\nabla v\right|^{2}-2\frac{uv}{\rho^{2}}\left\langle \nabla u,\nabla v\right\rangle \\
 & =u\Delta_{f}u+v\Delta_{f}v+\frac{1}{\rho^{2}}\left(v^{2}\left|\nabla u\right|^{2}+u^{2}\left|\nabla v\right|^{2}-2uv\left\langle \nabla u,\nabla v\right\rangle \right)\\
 & =u\Delta_{f}u+v\Delta_{f}v+\frac{1}{\rho^{2}}\left|u\nabla v-v\nabla u\right|^{2}.
\end{align*}
Since $u\nabla v-v\nabla u=\rho^{2}\nabla\theta$ we have
\[
\rho\Delta_{f}\rho=u\Delta_{f}u+v\Delta_{f}v+\rho^{2}\left|\nabla\theta\right|^{2}.
\]

The proof of the lemma is complete.
\end{proof}
We use the following lemma in the proof of Theorem \ref{thm:semicircle}.
\begin{lem}
\label{lem:laplacianthetanv}Let $M$ be a hypersurface with constant
weighted mean curvature $\lambda$ in the smooth metric measure space
$\left(R^{n+1},\bar{g},f\right)$, where $\bar{g}$ is the canonical
metric of $R^{n+1}$ and $f$ is a function on $R^{n+1}$ such that
$\bar{\nabla}\bar{\nabla}f=c\bar{g}$ for some $c$ constant. Suppose
that $\left(\left\langle N\left(p\right),E_{1}\right\rangle ,\left\langle N\left(p\right),E_{2}\right\rangle \right)$
is in $R^{2}\setminus\left([0,\infty)\times\left\{ 0\right\} \right)$
for all $p$ in $M$. Let $\theta$ be the function on $M$ given
by
\[
\theta\left(p\right)=\begin{cases}
\arctan\left(\frac{\left\langle N\left(p\right),E_{2}\right\rangle }{\left\langle N\left(p\right),E_{1}\right\rangle }\right) & \mathrm{if}\,\,\,\left\langle N\left(p\right),E_{1}\right\rangle \neq0,\\
\frac{\pi}{2}-\arctan\left(\frac{\left\langle N\left(p\right),E_{1}\right\rangle }{\left\langle N\left(p\right),E_{2}\right\rangle }\right) & \mathrm{if}\,\,\,\left\langle N\left(p\right),E_{2}\right\rangle \neq0,
\end{cases}
\]
and let $\rho$ be the function on $M$ given by
\[
\rho\left(p\right)=\sqrt{\left\langle N\left(p\right),E_{1}\right\rangle ^{2}+\left\langle N\left(p\right),E_{2}\right\rangle ^{2}},
\]
where $E_{1},\dots,E_{n+1}$ is the canonical basis of $R^{n+1}$.
Then
\[
div_{f}\left(\rho^{2}\nabla\theta\right)=0.
\]
\end{lem}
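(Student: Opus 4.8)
The plan is to combine the two preceding lemmas directly; there is essentially no hard analytic step here, only bookkeeping. First I would set $u = \langle N, E_1\rangle$ and $v = \langle N, E_2\rangle$, which are functions on $M$ taking values with $(u(p),v(p)) \in R^2\setminus([0,\infty)\times\{0\})$ by hypothesis, so that Lemma \ref{lem:laplaciantheta} applies verbatim and gives
\[
div_f\left(\rho^2\nabla\theta\right) = u\,\Delta_f v - v\,\Delta_f u.
\]
So it suffices to compute $\Delta_f u$ and $\Delta_f v$ and check that the right-hand side vanishes.

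Next I would invoke Lemma \ref{lem:laplaciannv}. Since $E_1$ and $E_2$ are fixed vectors in $R^{n+1}$ and $M$ sits in the smooth metric measure space $(R^{n+1},\bar g,f)$ with $\bar\nabla\bar\nabla f = c\bar g$, the lemma yields
\[
\Delta_f u = -\left|A\right|^2 u - \left\langle \nabla H_f, E_1\right\rangle, \qquad \Delta_f v = -\left|A\right|^2 v - \left\langle \nabla H_f, E_2\right\rangle.
\]
Here is the only place the hypothesis of constant weighted mean curvature is used: because $H_f \equiv \lambda$ is constant, $\nabla H_f = 0$, so both formulas collapse to $\Delta_f u = -\left|A\right|^2 u$ and $\Delta_f v = -\left|A\right|^2 v$.

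Finally I would substitute these into the expression from Lemma \ref{lem:laplaciantheta}:
\[
div_f\left(\rho^2\nabla\theta\right) = u\left(-\left|A\right|^2 v\right) - v\left(-\left|A\right|^2 u\right) = 0.
\]
That completes the argument. The only thing to be a little careful about is that $\theta$ is genuinely a well-defined smooth function on all of $M$ (the two branch definitions agreeing on the overlap via the $\arctan$ identity noted just before Lemma \ref{lem:laplaciantheta}), but that well-definedness is already built into the statement of Lemma \ref{lem:laplaciantheta}, so no extra work is needed; there is no real obstacle, just the assembly of the two lemmas plus the vanishing of $\nabla H_f$.
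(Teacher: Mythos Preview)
Your proof is correct and follows exactly the same approach as the paper: apply Lemma~\ref{lem:laplaciantheta} to reduce to $u\Delta_f v - v\Delta_f u$, then use Lemma~\ref{lem:laplaciannv} together with $\nabla H_f = 0$ to see that both $\Delta_f u$ and $\Delta_f v$ are $-|A|^2$ times the function itself, so the expression cancels. The paper's proof is simply a terser version of what you wrote.
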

\begin{proof}
By Lemma \ref{lem:laplaciantheta} we have
\[
div_{f}\left(\rho^{2}\nabla\theta\right)=\left\langle N,E_{1}\right\rangle \Delta_{f}\left(\left\langle N,E_{2}\right\rangle \right)-\left\langle N,E_{2}\right\rangle \Delta_{f}\left(\left\langle N,E_{1}\right\rangle \right).
\]
By Lemma \ref{lem:laplaciannv} and the fact that the weighted mean
curvature is constant we have
\[
div_{f}\left(\rho^{2}\nabla\theta\right)=0.
\]
The proof of the lemma is complete.
\end{proof}

\section{Proof of Theorem \ref{thm:hemisphere}}

In this section we prove Theorem \ref{thm:hemisphere}.

By Lemma \ref{lem:laplaciannv} and the fact that the weighted mean
curvature is constant we have
\[
\Delta_{f}\left\langle N,E_{n+1}\right\rangle +\left|A\right|^{2}\left\langle N,E_{n+1}\right\rangle =0,
\]
where $E_{1},\dots,E_{n+1}$ is the canonical basis of $R^{n+1}$.
Since the image of the Gauss map is in the closed hemisphere $x_{n+1}\geq0$
we have
\[
\left\langle N,E_{n+1}\right\rangle \geq0.
\]

First we assume that $\left\langle N,E_{n+1}\right\rangle $ is not
identically zero. Then by Lemma \ref{lem:liwang} we have
\[
\int_{M}\left|A\right|^{2}\phi^{2}e^{-f}\leq\int_{M}\left|\nabla\phi\right|^{2}e^{-f},
\]
for all $\phi$ in $C_{c}^{\infty}\left(M\right)$. Let $\phi$ be
the cutoff function on $M$ given by
\[
\phi=\begin{cases}
1 & \mathrm{in}\,\,\,B_{R},\\
R+1-r & \mathrm{in}\,\,\,B_{R+1}\setminus B_{R},\\
0 & \mathrm{in}\,\,\,M\setminus B_{R+1},
\end{cases}
\]
where $B_{R}=B\left(p_{0},R\right)$ is the intrinsic ball and $r\left(p\right)=dist_{M}\left(p,p_{0}\right)$
is the intrinsic distance function. Then
\[
\int_{B_{R}}\left|A\right|^{2}e^{-f}\leq\int_{M\setminus B_{R}}e^{-f}.
\]
Taking the limit as $R\to\infty$, using the monotone convergence
theorem and using the fact that $M$ has finite weighted volume (by
Theorem \ref{thm:finitevolume}) we get
\[
A=0.
\]
Then $M$ is a hyperplane.

Now we assume that $\left\langle N,E_{n+1}\right\rangle $ is identically
zero. Then for each $p$ in $M$ we have that $E_{n+1}$ is in $T_{p}M$.
Then for each $p$ in $M$ we have a straight line which is included
in $M$, passes through $p$, meets $R^{n}\times\left\{ 0\right\} $
orthogonally and extends infinitely to both ends (since $M$ is complete).
Then $M$ splits isometrically as $M^{n}=\tilde{M}^{n-1}\times R$,
where $\tilde{M}^{n-1}$ is the intersection of $M^{n}$ and $R^{n}\times\left\{ 0\right\} $.
We will show that $H_{f}^{\tilde{M}}=H_{f}^{M}$. Let $e_{1},\dots,e_{n}$
be a local orthonormal basis of $TM$ such that $e_{1}=E_{n+1}$ and
$e_{2},\dots,e_{n}$ is a local orthonormal basis of $T\tilde{M}$.
Since $\left\langle N,E_{n+1}\right\rangle =0$ we have that $N$
is also the unit normal vector of $\tilde{M}^{n-1}$ (with respect
to $R^{n}\times\left\{ 0\right\} $). Then
\begin{align*}
H_{f}^{M} & =H_{M}+\left\langle \bar{\nabla}f,N_{M}\right\rangle \\
 & =\left\langle \bar{\nabla}_{E_{n+1}}E_{n+1},N_{M}\right\rangle +\sum_{i=2}^{n}\left\langle \bar{\nabla}_{e_{i}}e_{i},N_{M}\right\rangle +\left\langle \bar{\nabla}f,N_{M}\right\rangle \\
 & =\sum_{i=2}^{n}\left\langle \bar{\nabla}_{e_{i}}e_{i},N_{\tilde{M}}\right\rangle +\left\langle \bar{\nabla}f,N_{\tilde{M}}\right\rangle \\
 & =H_{f}^{\tilde{M}}.
\end{align*}
Here we used the fact that $\bar{\nabla}_{E_{n+1}}E_{n+1}=0$.

The proof of Theorem \ref{thm:hemisphere} is complete.

\section{Proof of Theorem \ref{thm:semicircle}}

In this section we prove Theorem \ref{thm:semicircle}.

Let $\theta$ and $\rho$ be the functions on $M$ as in Lemma \ref{lem:laplacianthetanv}.
Since the image of the Gauss map is in $S^{n}\setminus\bar{S}_{+}^{n-1}$
we have that $\left(\left\langle N\left(p\right),E_{1}\right\rangle ,\left\langle N\left(p\right),E_{2}\right\rangle \right)$
is in $R^{2}\setminus\left([0,\infty)\times\left\{ 0\right\} \right)$
for all $p$ in $M$. By Lemma \ref{lem:laplacianthetanv} we have
\[
div_{f}\left(\rho^{2}\nabla\theta\right)=0.
\]
Let $\phi$ be the cutoff function on $M$ given by
\[
\phi=\begin{cases}
1 & \mathrm{in}\,\,\,B_{R},\\
R+1-r & \mathrm{in}\,\,\,B_{R+1}\setminus B_{R},\\
0 & \mathrm{in}\,\,\,M\setminus B_{R+1},
\end{cases}
\]
where $B_{R}=B\left(p_{0},R\right)$ is the intrinsic ball and $r\left(p\right)=dist_{M}\left(p,p_{0}\right)$
is the intrinsic distance function. Multiplying the above equation
by $\theta\phi^{2}$ and integrating by parts we get
\begin{align*}
0= & \int_{M}\theta\phi^{2}div_{f}\left(\rho^{2}\nabla\theta\right)e^{-f}\\
= & -\int_{M}\left|\nabla\theta\right|^{2}\phi^{2}\rho^{2}e^{-f}-2\int_{M}\theta\phi\left\langle \nabla\theta,\nabla\phi\right\rangle \rho^{2}e^{-f}.
\end{align*}
Since $2\left|\theta\phi\left\langle \nabla\theta,\nabla\phi\right\rangle \right|\leq\frac{1}{2}\left|\nabla\theta\right|^{2}\phi^{2}+2\theta^{2}\left|\nabla\phi\right|^{2}$
we have
\[
\int_{M}\left|\nabla\theta\right|^{2}\phi^{2}\rho^{2}e^{-f}\leq4\int_{M}\theta^{2}\left|\nabla\phi\right|^{2}\rho^{2}e^{-f}.
\]
Since $\theta$ and $\rho$ are bounded (see the definition of $\theta$
and $\rho$ in the statement of Lemma \ref{lem:laplacianthetanv})
we have
\[
\int_{B_{R}}\left|\nabla\theta\right|^{2}\rho^{2}e^{-f}\leq C\int_{M\setminus B_{R}}e^{-f}.
\]
Taking the limit as $R\to\infty$, using the monotone convergence
theorem and using the fact that $M$ has finite weighted volume (by
Theorem \ref{thm:finitevolume}) we get
\[
\left|\nabla\theta\right|^{2}\rho^{2}=0.
\]
Since $\left(\left\langle N\left(p\right),E_{1}\right\rangle ,\left\langle N\left(p\right),E_{2}\right\rangle \right)$
is in $R^{2}\setminus\left([0,\infty)\times\left\{ 0\right\} \right)$
for all $p$ in $M$, we have $\rho>0$. Then
\[
\theta=\theta_{0},
\]
where $\theta_{0}$ is a constant. We have $\left\langle N\left(p\right),E_{1}\right\rangle =\rho\left(p\right)\cos\theta_{0}$
and $\left\langle N\left(p\right),E_{2}\right\rangle =\rho\left(p\right)\sin\theta_{0}$
(see the definition of $\theta$ and $\rho$ in the statement of Lemma
\ref{lem:laplacianthetanv}). Let $V=\left(\cos\theta_{0},\sin\theta_{0},0,\dots,0\right)$
be a fixed vector in $R^{n+1}$. We have
\begin{align*}
\left\langle N\left(p\right),V\right\rangle  & =\rho\left(p\right)\left(\left(\cos\theta_{0}\right)^{2}+\left(\sin\theta_{0}\right)^{2}\right)\\
 & =\rho\left(p\right)\\
 & >0.
\end{align*}
Then the image of the Gauss map is in the open hemisphere
\[
\cos\theta_{0}x_{1}+\sin\theta_{0}x_{2}>0.
\]
By the proof of Theorem \ref{thm:hemisphere} we have that $M$ is
a hyperplane.

The proof of Theorem \ref{thm:semicircle} is complete.

\lyxaddress{Michael Gomez, Instituto de Matemática e Estatística, Universidade
Federal Fluminense, Niterói, RJ, Brazil.\\
Email address: michael\_eddy@id.uff.br}

\lyxaddress{Matheus Vieira, Departamento de Matemática, Universidade Federal
do Espírito Santo, Vitória, ES, Brazil.\\
Email address: matheus.vieira@ufes.br}
\end{document}